\numberwithin{equation}{section}
\newtheorem{theorem}{Theorem}[section]
\newtheorem{definition}[theorem]{Definition}
\newtheorem{lemma}[theorem]{Lemma}
\newtheorem{corollary}[theorem]{Corollary}
\newenvironment{proof}{\underline{Proof:}}{ \hfill $\Box$ 
                      \vspace{\baselineskip}}
\begin{document}

\pagestyle{myheadings}
\markright{Solvability of a finite group}

\title{A sufficient conditon for solvability of finite groups}

\author{Wujie Shi
}

\date{}
\maketitle

\begin{center}
Dept. of Math., Chongqing Univ. of Arts and Science, Chongqing 402160
\end{center}

\footnotetext{The author gratefully acknowledges
  the support by
National Natural Science Foundation of China (Grant No. 11171364, 11271301, 11671063).\\
{\em AMS Subject Classification}: 20D10; 20D60

{\em Key words and phrases}: finite group, solvability, set of element orders}
\renewcommand{\arraystretch}{0.1}

\begin{abstract}
The following theorem is proved: Let $G$ be a finite group and
$\pi_e(G)$ be the set of element orders in $G$. If $\pi_e(G) \cap
\{2\}=\emptyset$; or $\pi_e(G) \cap \{3, 4\}=\emptyset$; or
$\pi_e(G) \cap \{3,5\}=\emptyset$, then $G$ is solvable. Moreover,
using the intersection with $\pi_e(G)$ being empty set to judge $G$
is solvable or not, only the above three cases.
\end{abstract}

\baselineskip .65 true cm

\section{Introduction}
Let $G$ be a finite group. We have two basic sets: $|G|$ and
$\pi_e(G)$. There are many famous works about $|G|$ in the history
of group theory. The set $\pi_e(G)$ was studied first by author in
\cite{S-A}. The main results in \cite{S-A} are:

\begin{lemma} \label{l1}
    Let $G$ be a finite group. If $\pi_e(G)=\{1,2,3,5\}$, then $G \cong A_5$.
\end{lemma}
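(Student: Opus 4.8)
The plan is to exploit the fact that $\pi_e(G)=\{1,2,3,5\}$ contains no composite number, so no two elements of coprime prime orders can commute: if $x,y\in G$ have orders $p\ne q$ with $p,q\in\{2,3,5\}$ and $xy=yx$, then $xy$ has order $pq\in\{6,10,15\}$, all excluded. Hence the centralizer of every element of prime order $p$ is a $p$-group, i.e.\ the prime graph of $G$ on $\{2,3,5\}$ is totally disconnected. First I would record the structural consequences. Since $\pi_e(G)$ contains no higher prime power ($4,9,25\notin\pi_e(G)$), each Sylow subgroup $P_p$ ($p=2,3,5$) has exponent $p$ and is therefore elementary abelian; and since $P_p$ is abelian, $P_p\le C_G(z)$ for every $z\in P_p$, forcing $C_G(z)=P_p$. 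From this it follows that distinct Sylow $p$-subgroups meet trivially (a TI-set), that $C_G(P_p)=P_p$, and that $N_G(P_p)/P_p$ embeds in $\operatorname{Aut}(P_p)$ and acts on $P_p$ without nontrivial fixed points (a $p'$-element fixing some $z\ne1$ would lie in $C_G(z)=P_p$, a contradiction).

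The second step is to read off that each normalizer $N_G(P_p)=P_p\rtimes H_p$ is a Frobenius group with elementary abelian kernel $P_p$ and fixed-point-free complement $H_p$. As a Frobenius complement, all Sylow subgroups of $H_p$ are cyclic or generalized quaternion, and every subgroup of $H_p$ of order $rs$ (distinct primes) is cyclic; moreover the element orders available to $H_p$ are those of $G$ coprime to $p$. Because there is no element of order $4,6,10,15$, these constraints pin $H_p$ to a very short list: $|H_2|\in\{1,3,5\}$, $|H_3|\in\{1,2,5\}$, $|H_5|\in\{1,2,3\}$, together with the Frobenius divisibility $|H_p|\mid |P_p|-1$.

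The third step is a global count. By the TI property the number of elements of order $p$ is $n_p(|P_p|-1)$, where $n_p=[G:N_G(P_p)]=|G|/(|P_p|\,|H_p|)$. Writing $|G|=2^a3^b5^c$ and summing over the three primes together with the identity yields the single relation
\[
\frac{1-2^{-a}}{|H_2|}+\frac{1-3^{-b}}{|H_3|}+\frac{1-5^{-c}}{|H_5|}=1-\frac{1}{|G|}.
\]
I expect this Diophantine equation, combined with the short lists for the $|H_p|$ and the divisibilities $|H_p|\mid|P_p|-1$, to force $a=2$, $b=c=1$ and $|H_2|=3$, $|H_3|=|H_5|=2$; that is, $|G|=60$ with $N_G(P_2)\cong A_4$, $N_G(P_3)\cong S_3$, $N_G(P_5)\cong D_{10}$ and $(n_2,n_3,n_5)=(5,10,6)$, which is exactly the arithmetic of $A_5$.

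The main obstacle is the endgame: upgrading ``$|G|=60$ with these Sylow invariants'' to ``$G\cong A_5$'', and, just before that, eliminating the degenerate solvable configurations (some $|H_p|=1$, i.e.\ a self-normalizing Sylow subgroup giving a normal $p$-complement and a Frobenius or $2$-Frobenius structure) that the counting alone does not immediately rule out. I would dispose of the degenerate cases structurally: a self-normalizing Sylow subgroup forces, via Burnside's normal complement theorem, a proper normal subgroup whose order is incompatible with the element count above. The generic case I would finish by letting $G$ act on its five Sylow $2$-subgroups (or six Sylow $5$-subgroups), obtaining a nontrivial homomorphism $G\to S_5$ whose kernel is trivial by the TI and self-centralizing structure; then $G$ is a transitive subgroup of order $60$ in $S_5$, which must be $A_5$. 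Equivalently, one shows $G$ is a simple group of order $60$ and invokes the classical fact that $A_5$ is the unique such group.
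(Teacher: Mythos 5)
The paper offers no proof of Lemma~\ref{l1}; it is imported verbatim from \cite{S-A}, so there is no internal argument to measure yours against. Judged on its own terms, your plan is well conceived in outline but has one load-bearing error. You write that since $4,9,25\notin\pi_e(G)$ each Sylow $p$-subgroup has exponent $p$ ``and is therefore elementary abelian.'' That implication holds only for $p=2$. For $p=3$ and $p=5$ there are non-abelian groups of exponent $p$ (the extraspecial group of order $p^3$ and exponent $p$, for instance), so the absence of elements of order $9$ or $25$ does not make $P_3$ or $P_5$ abelian. This is not cosmetic: abelianness is exactly what you use to conclude $C_G(z)=P_p$ for every $1\ne z\in P_p$, hence that distinct Sylow $p$-subgroups intersect trivially, hence that the number of elements of order $p$ is exactly $n_p(|P_p|-1)$ --- and that count is the engine of your Diophantine equation. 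Note that the Frobenius structure of $N_G(P_p)=P_p\rtimes H_p$ does survive without abelianness (a $p'$-element of prime order centralizing some $1\ne z\in P_p$ would yield an element of composite order, so $H_p$ acts fixed-point-freely in any case), but the TI property and the exact element count do not. You need a separate argument that $P_3$ and $P_5$ are abelian, or at least that Sylow subgroups for odd primes are TI sets; the standard maximal-intersection argument does not close this off immediately, since $N_G(P\cap Q)$ is only forced to be a $\{p,q\}$-group with $q$ acting fixed-point-freely on $P\cap Q$, which is not yet a contradiction.

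Two smaller points. First, the Diophantine endgame is asserted rather than carried out (``I expect this \dots to force \dots''); it does work, but not purely from the crude inequality --- for example the case $|H_2|=1$ gives a lower bound of only $9/10$ for the left-hand side and is eliminated only after invoking the divisibility constraints $5\mid |H_3|\Rightarrow 4\mid b$ and $3\mid |H_5|\Rightarrow 2\mid c$, which push the sum past $1$. (Conversely, your planned Burnside detour for self-normalizing Sylow subgroups is then unnecessary.) Second, the final identification of a group of order $60$ with $n_5=6$ as $A_5$ is standard and fine. So the skeleton is viable, but as written the proof fails at the odd-Sylow step, and the case analysis that is supposed to pin down $|G|=60$ is still a promissory note.
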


One can easily get the following conclusion from \cite{S-J}:

\begin{lemma} \label{l2}
    Let $G$ be a finite group with $\pi_e(G)=\{2, $ the factors of $(2^n-1)$ and $(2^n+1), n \ge 2\}$. Then $G \cong L_2(2^n)$.
\end{lemma}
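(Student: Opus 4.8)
Write $q=2^{n}$, so that the hypothesis reads $\pi_e(G)=\{1,2\}\cup\{d:d\mid q-1\}\cup\{d:d\mid q+1\}$, which is exactly $\pi_e(L_2(q))$; the content of the statement is that this spectrum is \emph{recognizable}. The plan is to read off enough local structure from $\pi_e(G)$ to force a disconnected prime (Gruenberg--Kegel) graph, apply the Gruenberg--Kegel reduction to land on a simple section, identify that section through the classification of simple groups with disconnected prime graph, and finally strip away the solvable padding.

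First I would extract three facts. Since $4\notin\pi_e(G)$, every $2$-element of $G$ is an involution, so each Sylow $2$-subgroup is elementary abelian. Since $2p\notin\pi_e(G)$ for every odd prime $p$, no odd-order element commutes with an involution, i.e. the centralizer of every involution is a $2$-group. Reading the prime graph $\Gamma(G)$ off $\pi_e(G)$, the vertex $2$ is isolated, while the existence of elements of orders $q-1$ and $q+1$ makes $\pi(q-1)$ and $\pi(q+1)$ into two cliques; as $\gcd(q-1,q+1)=1$ and no element order is divisible by primes drawn from both sets, $\Gamma(G)$ has exactly the three components $\{2\}$, $\pi(q-1)$, $\pi(q+1)$.

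Because $\Gamma(G)$ is disconnected, the Gruenberg--Kegel theorem applies. Frobenius and $2$-Frobenius groups have prime graphs with only two components, so the presence of three components forces the remaining alternative: $G$ has a normal series $1\trianglelefteq H\trianglelefteq K\trianglelefteq G$ with $H$ a nilpotent $\{2\}$-group, $K/H$ a nonabelian simple group, $G/K$ a $\{2\}$-group, and the two odd components $\pi(q-1),\pi(q+1)$ already appearing as prime graph components of $S:=K/H$. Now I would invoke the Williams--Kondrat'ev classification of finite simple groups with disconnected prime graph. The inherited conditions that $2$ be isolated in $\Gamma(S)$ and that $S$ have elementary abelian Sylow $2$-subgroups exclude the Suzuki groups (which contain elements of order $4$) and the odd-characteristic candidates (whose involution centralizers contain odd-order elements); what survives is $S\cong L_2(2^{m})$, and matching $\pi(2^{m}-1)=\pi(q-1)$ and $\pi(2^{m}+1)=\pi(q+1)$ together with $\pi_e(S)\subseteq\pi_e(G)$ pins down $m=n$. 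I expect this classification step, and the bookkeeping needed both to eliminate every competing family and to fix the field size, to be the main obstacle.

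It remains to show $H=1$ and $G=K$. If some involution $t\in S$ failed to centralize the elementary abelian $2$-group $H$, pick $v\in H\setminus C_H(t)$; then $(vt)^{2}=v\,v^{t}\neq1$ while $(vt)^{4}=1$, producing an element of order $4$, which is impossible. Hence every involution of $S$ centralizes $H$, and since $S$ is generated by its involutions, $[H,S]=1$, so $K=H\times S$. If $H\neq1$, an involution of $H$ would commute with an element of $S$ of order $q-1$, giving an element of order $2(q-1)\notin\pi_e(G)$; therefore $H=1$ and $K=S$. Finally $C_G(K)=1$ (a nontrivial element of it would again commute both with an involution and with an element of order $q-1$ of $S$, yielding a forbidden order), so $G\hookrightarrow\mathrm{Aut}(L_2(2^{n}))$ with $G/S$ a $2$-subgroup of the cyclic field-automorphism group $\mathrm{Out}(L_2(2^{n}))$; a field automorphism of order $2$ would centralize a copy of $L_2(2^{n/2})$ and hence commute with odd-order elements, once more forbidden. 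Thus $G/S=1$ and $G\cong L_2(2^{n})$.
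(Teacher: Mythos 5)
First, a point of reference: the paper does not prove this lemma at all --- it is quoted from \cite{S-J}, so there is no in-paper argument to measure you against. Your strategy (read off that the Sylow $2$-subgroups are elementary abelian and that involution centralizers are $2$-groups, observe that the prime graph has the three components $\{2\}$, $\pi(2^n-1)$, $\pi(2^n+1)$, invoke Gruenberg--Kegel to get $1\trianglelefteq H\trianglelefteq K\trianglelefteq G$ with $K/H$ simple and $H$, $G/K$ $2$-groups, identify $K/H$, then kill $H$ and $G/K$) is a standard and workable modern route to recognition by spectrum. Your endgame is essentially correct: the order-$4$ computation forcing involutions of $K$ to centralize the elementary abelian group $H$ (though you should work with a preimage $x\in K$ of $t$, noting $x^2\in H$ forces $x^2=1$ since $4\notin\pi_e(G)$), the centralizer argument killing $H$ and $C_G(K)$, and the field-automorphism argument killing $G/K$ all go through. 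A more economical route, and almost certainly the one available to the 1987 source, is to skip the prime graph entirely: since $4\notin\pi_e(G)$ the Sylow $2$-subgroups are elementary abelian, so Walter's classification of simple groups with abelian Sylow $2$-subgroups applies directly to the nonabelian composition factor, leaving only $L_2(2^m)$, $L_2(q)$ with $q\equiv\pm 3\ (\mathrm{mod}\ 8)$, $J_1$ and the Ree groups, and the latter three families all have elements of order $2p$ for odd $p$.

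The genuine gap is the identification step, which you yourself flag as ``the main obstacle'' but then dispatch in one sentence with a justification that is not correct as stated. You claim the odd-characteristic candidates are excluded because ``their involution centralizers contain odd-order elements''; this is false for $L_2(7)$, $L_2(9)\cong A_6$, $L_2(17)$ and generally $L_2(q)$ with $q\pm 1$ a power of $2$, whose involution centralizers are dihedral $2$-groups. These groups are in fact excluded, but by the \emph{other} hypothesis ($4\notin\pi_e(G)$), and the Williams--Kondrat'ev list contains many further characteristic-$2$ and sporadic entries that your two stated criteria must be checked against one by one; none of that case analysis is present. Likewise, pinning down $m=n$ from $\{\pi(2^m-1),\pi(2^m+1)\}=\{\pi(2^n-1),\pi(2^n+1)\}$ together with $\pi_e(L_2(2^m))\subseteq\pi_e(G)$ is not automatic: it needs Zsigmondy's theorem on primitive prime divisors (and a word about the possibility that the two odd components are matched crosswise). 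As it stands the proposal is a sound skeleton whose load-bearing middle section is asserted rather than proved.
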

For the simple group $Sz(2^{2m+1})$, we have the following
result(\cite{S-P}):

\begin{lemma}\label{l3}
    Let $G$ be a finite group. If $\pi_e(G)=\{2,4, $ the factors of $(2^{2m+1}-1), (2^{2m+1}-2^{m+1}+1)$ and $(2^{2m+1}+2^{m+1}+1), m \ge 1\}$, then $G \cong Sz(2^{2m+1})$.
\end{lemma}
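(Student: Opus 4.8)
The plan is to read the Gruenberg--Kegel (prime) graph $GK(G)$ off the spectrum and then invoke the structure theory of groups with disconnected prime graph. Write $q=2^{2m+1}$, so that $2^{m+1}=\sqrt{2q}$ and $q^2+1=(q-2^{m+1}+1)(q+2^{m+1}+1)$. Denoting by $\pi(n)$ the set of prime divisors of $n$, the three odd numbers $q-1$, $q-2^{m+1}+1$, $q+2^{m+1}+1$ are pairwise coprime (they are odd, $\gcd(q-1,q^2+1)=\gcd(q-1,2)=1$ since $q^2+1\equiv 2\pmod{q-1}$, and the two factors of $q^2+1$ differ by the power of two $2^{m+2}$), so the vertex set of $GK(G)$ partitions into $\{2\}$, $\pi(q-1)$, $\pi(q-2^{m+1}+1)$ and $\pi(q+2^{m+1}+1)$. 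Since the only even orders in $\pi_e(G)$ are $2$ and $4$, there is no element of order $2p$ with $p$ odd, so $\{2\}$ is an isolated vertex; and since any two primes dividing a common one of the three odd numbers $n$ have product dividing $n\in\pi_e(G)$, each $\pi(n)$ is a clique. Hence $GK(G)$ has exactly four connected components.

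Next I would apply the Gruenberg--Kegel (Williams) theorem: a group with disconnected prime graph is Frobenius, $2$-Frobenius, or has a normal series $1\trianglelefteq H\trianglelefteq K\trianglelefteq G$ with $H$ a nilpotent $\pi_1$-group ($\pi_1$ the component containing $2$), $S:=K/H$ nonabelian simple, $G/K\hookrightarrow \mathrm{Out}(S)$, and every component of $GK(G)$ other than $\pi_1$ a component of $GK(S)$. Frobenius and $2$-Frobenius groups have at most two components, so with four components $G$ must be of the third type; here $\pi_1=\{2\}$, whence $H$ is a $2$-group and $S$ is a nonabelian simple group whose prime graph contains the three odd components $\pi(q-1),\pi(q-2^{m+1}+1),\pi(q+2^{m+1}+1)$.

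The decisive step is to identify $S$. Using the classification of finite simple groups with disconnected prime graph (Williams, Kondratiev), only a short, explicitly tabulated list of simple groups has three or more components; matching the three pairwise coprime component products $q-1$, $q-2^{m+1}+1$, $q+2^{m+1}+1$ against the component orders in that list --- together with the constraints that $2$ is isolated and the Sylow $2$-subgroup has exponent $4$ (orders $2,4$ but not $8$, and no order $2p$) --- singles out $S\cong Sz(q)={}^2B_2(q)$ for the same $q$. I expect this matching to be the main obstacle: it requires excluding every other simple group (other twisted groups of Lie type in characteristic $2$, the Ree groups, the linear groups $L_2$, the exceptional groups, and the sporadic groups) by comparing the precise arithmetic of their prime-graph components with the three prescribed numbers and showing the parameter is forced to equal $q$.

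Finally I would close the gap between $S$ and $G$. To see $H=1$: a nontrivial normal $2$-group $H$ contains a $G$-invariant elementary abelian $2$-section on which $S\cong Sz(q)$ acts, and since $Sz(q)$ admits no nontrivial action centralized by all its odd-order elements, some torus element of odd prime order $p$ acts nontrivially and hence centralizes a nontrivial involution, producing an element of order $2p$ and contradicting the isolation of $\{2\}$ (the residual central case is excluded by the Schur multiplier of $Sz(q)$, trivial for $q>8$, with $q=8$ treated directly). To see $G=K$: $G/K\hookrightarrow\mathrm{Out}(Sz(q))\cong \mathbb{Z}_{2m+1}$, and a nontrivial field automorphism of prime order $r\mid 2m+1$, combined with its centralizer $Sz(q_0)$ where $q_0=2^{(2m+1)/r}$, yields an element whose order divides none of $q-1,\,q-2^{m+1}+1,\,q+2^{m+1}+1$, again leaving $\pi_e(Sz(q))$. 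Hence $H=1$ and $G=K=S\cong Sz(2^{2m+1})$.
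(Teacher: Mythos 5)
The paper does not prove this lemma: it is imported as a known result from \cite{S-P} (Shi, \emph{Proc.\ Amer.\ Math.\ Soc.} 114 (1992)), so there is no in-paper argument to measure your proposal against. On its own terms, your outline follows the standard recognition-by-spectrum strategy --- compute the Gruenberg--Kegel graph from the spectrum, apply the Gruenberg--Kegel/Williams structure theorem, identify the simple section $S$, then eliminate $H$ and $G/K$ --- and the preparatory steps are correct: the three odd numbers are indeed pairwise coprime, $\{2\}$ is isolated, and $GK(G)$ has four components, so $G$ is neither Frobenius nor $2$-Frobenius.

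The genuine gap is the step you yourself flag as ``the main obstacle'': identifying $S$. As written, ``match against the tabulated list of simple groups with disconnected prime graph'' is a description of work, not a proof, and it is where essentially all the content of the lemma lives. You are also missing the observation that collapses this step: since $2^{2m+1}\equiv 2\pmod 3$ and $2^{4m+2}+1\equiv 2\pmod 3$, none of $2^{2m+1}-1$, $2^{2m+1}\pm 2^{m+1}+1$ is divisible by $3$, so $3\notin\pi_e(G)$, hence $3\nmid |G|$ by Cauchy and $3\nmid |S|$; by the classification, the only nonabelian simple groups of order prime to $3$ are the Suzuki groups, so $S\cong Sz(2^{2k+1})$ at once and only the parameter $k=m$ remains to be matched from the torus orders. (The paper itself exploits exactly this fact, that $3\notin\pi_e(Sz(2^{2m+1}))$, in the discussion following Lemma \ref{l4}.) Two further loose ends: your argument for $H=1$ rests on the unproved assertion that ``$Sz(q)$ admits no nontrivial action centralized by all its odd-order elements''; the standard repair is a fixed-point lemma for Frobenius subgroups of $Sz(q)$ acting coprimely on $H/\Phi(H)$, and it needs care because elements of order $2$ and $4$ \emph{are} allowed to have fixed points. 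And for $G=K$ you can skip the field-automorphism analysis entirely: the Gruenberg--Kegel theorem makes $G/K$ a $\pi_1$-group with $\pi_1=\{2\}$, while $\mathrm{Out}(Sz(q))$ has odd order $2m+1$, so $G/K=1$ immediately.
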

We will use the above three lemmas in the following discussion.

Compare with the study of $|G|$, we can also ask similar question
about $\pi_e(G)$:

In \cite{S-S}, one can find the definition of $h$ function: for a set $S$ of element orders of a finite group, $h(S)$ is defined to be the number of non-isomorphic groups $G$ with $\pi_e(G)=S$. There are many study about the $h$ function with $h(S)=1$, which means that the group $G$ can be characterized by $\pi_e(G)$. The recent study can be find in \cite{V-G}.

Same as CLT-group (\cite{H}), we study COE-group in \cite{S7}.

As the solvability can be decided by the order of a group
(\cite{H-P}), we give condition about solvabilty by $\pi_e(G)$. This
can be seen as a supplement of the following theorem in \cite{D}:

\begin{theorem}
    Let $G$ be a finite group and $\pi_e(G)$ be the set of element orders. Let $|\pi(G)|$ be the number of prime in $\pi_e(G)$ and $|\chi(G)|$ be the number of composite number in $\pi_e(G)$. Then $|\pi(G)| \le |\chi(G)|+3$, and $G$ is simple if $|\pi(G)|=|\chi(G)|+3$.
\end{theorem}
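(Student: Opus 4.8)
The plan is to recast everything in terms of the prime (Gruenberg--Kegel) graph $\Gamma(G)$, whose vertices are the primes dividing $|G|$ and in which two primes $p,q$ are joined exactly when $G$ has an element of order $pq$. Since every prime divisor of $|G|$ occurs as an element order (Cauchy), the vertex set is precisely $\pi(G)$, so $|\pi(G)|$ equals the number of vertices $k$. Each edge $\{p,q\}$ produces a distinct composite element order $pq$ (squarefree), and each prime $p$ whose Sylow subgroup contains an element of order $p^2$ produces a further distinct composite order $p^2$ (non-squarefree); these composites are pairwise distinct and all lie in $\chi(G)$. Writing $t(G)$ for the number of connected components of $\Gamma(G)$ and $s(G)$ for the number of primes admitting an element of order $p^2$, the elementary fact that a graph on $k$ vertices with $t$ components has at least $k-t$ edges yields the master inequality $|\chi(G)| \ge (k - t(G)) + s(G)$, hence $|\pi(G)| - |\chi(G)| \le t(G) - s(G)$.

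First I would dispose of the case $t(G) \le 3$: the master inequality gives $|\pi(G)| - |\chi(G)| \le 3$ immediately, with no appeal to structure theory. The content of the theorem is therefore concentrated in the groups with $t(G) \ge 4$, where I must show $s(G) \ge t(G) - 3$, i.e. that a group with many prime-graph components necessarily carries enough prime-power element orders to compensate. Here I would invoke the Gruenberg--Kegel theorem: a finite group with disconnected prime graph is Frobenius, $2$-Frobenius, or possesses a normal series $1 \lhd H \lhd K \lhd G$ with $K/H$ a nonabelian simple group whose prime-graph components govern those of $G$. Frobenius and $2$-Frobenius groups have $t = 2$, so only the third case survives, and the problem reduces to the finite simple groups $S$ with $t(S) \ge 4$.

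The decisive step, and the one I expect to be the main obstacle, is the case-by-case verification for the simple groups with $t(S) \ge 4$, which are classified by the work of Williams and Kondrat'ev (a short explicit list: $Sz(2^{2m+1})$ together with a handful of other groups of Lie type and sporadic groups). For each such $S$ one reads off the exponents of the Sylow subgroups and checks directly that $s(S) \ge t(S) - 3$; for instance $Sz(2^{2m+1})$ has $t = 4$ and a Sylow $2$-subgroup of exponent $4$, so $s \ge 1 = t - 3$. The bookkeeping must then be pushed through the section $K/H$ back to $G$, using that the odd components of $\Gamma(G)$ coincide with components of $\Gamma(K/H)$ and that the relevant element orders of the section survive in $G$. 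I would also need to confirm that no group in the list is elementary abelian at every Sylow prime, since such a configuration would make $s=0$ and break the bound; this non-vanishing of prime powers is exactly the crux that makes the constant $3$ correct.

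Finally, for the equality statement I would run the inequalities backward. If $|\pi(G)| - |\chi(G)| = 3$ then every estimate above is forced to be an equality: $t(G) - s(G) = 3$, each component of $\Gamma(G)$ is a tree, and $\chi(G)$ contains no composite order beyond the edge-products $pq$ and the squares $p^2$ already counted. This rigidity forbids any element of order $p^3$, $p^2q$, or $pqr$ and pins the local structure down tightly; a nontrivial normal subgroup or proper nontrivial quotient would introduce a composite order or an edge outside this minimal budget, so combined with the Gruenberg--Kegel structure it forces $H = 1$ and $G/K = 1$, whence $G \cong K/H$ is simple. Where the rigidity determines $\pi_e(G)$ explicitly, I would conclude by matching it against Lemmas \ref{l1}--\ref{l3} to identify $G$ with $A_5$, $L_2(2^n)$, or $Sz(2^{2m+1})$; the examples $A_5$ (three isolated vertices, $\chi=\emptyset$) and $Sz(8)$ (four components compensated by the order $4$) show that both the bound and the simplicity conclusion are sharp.
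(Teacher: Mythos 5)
This theorem is not proved in the paper at all: it is quoted verbatim from the reference \cite{D} (Deng--Shi, \emph{A simplicity criterion for finite groups}), so there is no in-paper argument to measure your proposal against. Your general strategy --- prime graph, Gruenberg--Kegel, reduction to the simple groups with at least four prime-graph components --- is the right kind of strategy and is in the spirit of the original source. But there is a concrete gap in the step you yourself flag as decisive.

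Your master inequality $|\pi(G)|-|\chi(G)|\le t(G)-s(G)$ is correct, but the reduction of the case $t(G)\ge 4$ to the claim $s(G)\ge t(G)-3$ is false, and the ``case-by-case verification'' you propose would not go through. The group $J_4$ has ten prime divisors falling into six prime-graph components ($\{2,3,5,7,11\}$ together with the five isolated vertices $23,29,31,37,43$), so $t=6$; yet the only prime $p$ with $p^2\in\pi_e(J_4)$ is $p=2$ (there is no element of order $9$, $25$, $49$ or $121$), so $s=1$ and $t-s=5>3$. The theorem still holds for $J_4$ --- $|\pi|=10$ and $|\chi|=20$ --- but only because the component containing $2$ carries far more edges than the spanning-tree minimum $k_i-1$ and because higher $2$-powers such as $8$ and $16$ contribute further composites, neither of which your count sees. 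So for $t\ge 5$ you must strengthen the lower bound on $|\chi(G)|$ (count actual edges inside the big component, or count all prime powers $p^a$ with $a\ge 2$, not just squares) rather than rely on $s\ge t-3$. A secondary weakness: the equality case is argued only by the heuristic that a normal subgroup ``would introduce a composite order outside the minimal budget''; this is not self-evident (a nilpotent normal subgroup $H$ in the Gruenberg--Kegel configuration need not a priori create a new edge or a new prime power) and needs an actual argument, which is where the real work of \cite{D} lies.
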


\begin{definition}
    Let $G$ be a finite group and $\pi_e(G)$ be the set of element orders of $G$. A set $T$ is called IES if $T \cap \pi_e(G)=\emptyset$ implies $G$ is solvable.
\end{definition}

If an IES-set $T$ with $|T|=1$, then by Feit-Thompson theorem, we get $T=\{2\}$. For the other set $T$, there are examples of non-solvable groups. So we get

For an IES-set, $|T|=1$ if and only if $T=\{2\}$.

We can consider the IES-set $T$ with $|T|=2$. If $T=\{2, *\}$, where
$*$ is an integer $>2$, we can get $G$ is solvable and this is
trivial. We need to consider the case that $T=\{3, **\}$, where
$**>3$.

J. G. Thompson classified all the minimal simple groups in \cite{Th}:

\begin{lemma} \label{l4}
        The minimal simple groups are:

    (1) $L_2(p)$, $p>3$, $5\nmid (p^2-1)$, where $p$ is a prime;

    (2) $L_2(2^p)$, where $p$ is a prime;

    (3) $L_2(3^p)$, where $p$ is a odd prime;

    (4) $L_3(3)$;

    (5) $Sz(2^{2m+1})$, where $2m+1$ is a prime.
\end{lemma}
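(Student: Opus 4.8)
The final statement is Thompson's classification of the minimal simple groups, so the plan is to derive the list in three stages: reduce ``minimal simple'' to the local solvability hypothesis that drives the whole analysis, read off the arithmetic side-conditions from an elementary subgroup count, and then carry out (following Thompson) the local and character-theoretic work that actually identifies the groups.

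\textbf{Reduction to $N$-groups.} First I would note that if $G$ is a minimal simple group then every proper subgroup is solvable, and since $G$ is non-abelian simple it has no nontrivial normal $p$-subgroup; hence for each nontrivial $p$-subgroup $P$ the normalizer $N_G(P)$ is a proper subgroup and therefore solvable. Thus every local subgroup of $G$ is solvable, i.e.\ $G$ is a non-abelian simple $N$-group. Classifying the minimal simple groups then splits into listing the non-abelian simple $N$-groups and, among those, isolating the ones in which \emph{every} proper subgroup---not merely every local one---is solvable.

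\textbf{The arithmetic side-conditions.} The congruence and primality restrictions in (1)--(5) are forced by just two kinds of non-solvable subgroups: subfield subgroups and copies of $A_5$. For $L_2(r^n)$ and for $Sz(2^n)$ one has the subfield chain $L_2(r^m)\le L_2(r^n)$ (resp.\ $Sz(2^m)\le Sz(2^n)$) whenever $m\mid n$, and such a subgroup is non-solvable once the subfield is large enough; minimality therefore forces $n$ prime, giving the constraints in (2),(3),(5). For $L_2(p)$ with $p$ prime, Dickson's list of subgroups shows that the only non-solvable proper subgroups are copies of $A_5\cong L_2(5)$, and $A_5\le L_2(p)$ exactly when $5\mid p^2-1$; hence $L_2(p)$ is minimal simple precisely when $5\nmid p^2-1$, which is condition (1). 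The same $A_5$-count explains why the exponent in (3) must be \emph{odd} (for even exponent $L_2(3^2)\cong A_6\supset A_5$, whereas for odd $p$ one computes $3^{2p}\equiv(-1)^p\equiv -1\pmod 5$, so $5\nmid 3^{2p}-1$ and no $A_5$ occurs), and why no congruence is needed in (2),(5): in $Sz(2^{2m+1})$ we have $3\nmid|Sz|$ so $A_5$ cannot embed, and in $L_2(2^p)$ with $p$ odd a copy of $A_5\cong L_2(4)$ would force the even subfield $\mathbb{F}_4$, impossible.

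\textbf{The local identification (the core).} It remains to prove that these families, together with $L_3(3)$, exhaust the non-abelian simple $N$-groups. Following Thompson, I would take such a $G$ and study its $2$-local and $p$-local structure under the standing hypothesis that every maximal subgroup is solvable. The steps are: (i) control the maximal subgroups through their Fitting subgroups and the Thompson factorization (``$J$-subgroup'') machinery; (ii) determine the isomorphism type of a Sylow $2$-subgroup and the fusion of its involutions, separating the dihedral/elementary-abelian case (which leads to the $L_2(q)$ series and to $L_3(3)$) from the case producing the Suzuki groups $Sz(2^{2m+1})$; (iii) apply exceptional-character theory to the relevant (almost) Frobenius local subgroups to pin down the orders of the cyclic tori, thereby recovering the full spectrum $\pi_e(G)$; and (iv) once $\pi_e(G)$ is known, invoke the recognition results already available---Lemma \ref{l2} for the $L_2(2^n)$ family, Lemma \ref{l3} for the Suzuki family, and Lemma \ref{l1} for $A_5=L_2(4)$---to conclude the isomorphism, treating $L_2(p)$, $L_2(3^p)$ and $L_3(3)$ by their own local characterizations.

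\textbf{Main obstacle.} The hard part will be steps (ii) and (iii): the exhaustive analysis of the Sylow $2$-structure together with the attendant exceptional-character computations. This is precisely the technical heart of Thompson's $N$-group papers and admits no short cut---the reduction and the arithmetic bookkeeping above are comparatively routine, and the recognition lemmas only become usable \emph{after} this local analysis has delivered the element-order data. For this reason the statement is most honestly imported as a lemma, with the self-contained proof being Thompson's original local/uniqueness and signalizer arguments.
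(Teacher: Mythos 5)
The paper offers no proof of this lemma at all: it is quoted verbatim as Thompson's classification and attributed to \cite{Th}, which is exactly the treatment you recommend when you conclude that the statement is ``most honestly imported as a lemma'' whose real proof is Thompson's $N$-group analysis. Your reduction to $N$-groups and your derivation of the arithmetic side-conditions (subfield subgroups forcing prime exponents, $A_5$-embeddings forcing $5\nmid p^2-1$ and the odd exponent in (3)) are correct as a sketch, so your proposal is consistent with, and somewhat more informative than, what the paper actually does.
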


Notice that in the above lemma, the simple groups in (1)-(4)  all
have an element of order $3$. For the simple group $Sz(2^{2m+1})$,
since $\pi_e(G)=\{2,4, $ the factors of $(2^{2m+1}-1),
(2^{2m+1}-2^{m+1}+1)$ and $(2^{2m+1}+2^{m+1}+1), m \ge 1\}$, we get
that $3 \not \in \pi_e(Sz(2^{2m+1})))$, but $4, 5 \in
\pi_e(Sz(2^{2m+1}))$($5 \mid
(2^{2m+1}-2^{m+1}+1)(2^{2m+1}+2^{m+1}+1)$ for $5\mid (2^{4m+2}+1)$).
Hence if $|T|=2$, then $T=\{3,4\}$ and $\{3,5\}$ are IES-sets. We
claim that there are no other IES-set $T$ with $|T|=2$.

(a) Let $T=\{3, x\}$, where $x >5$. Notice $\pi_{e}(Sz(2^3))=\{1,2,4,5,7,13\}$ and $\pi_e(Sz(2^5))=\{1,2,4,5,25,31,41\}$. We get that the common element orders in the above two minimal simple groups are $\{1,2,4,5\}$. So we can get a conterexample for any $x>5$.

(b) Let $T=\{4,y\}$, where $y>4$. Notice $\pi_e(A_5)=\{1,2,3,5\}$.
To exclude this case, we know $y=5$. Since
$\pi_e(L_2(2^3))=\{1,2,3,7,9\}$, thus such $y$ such that $T$ is an
IES-set does not exist.

(c) Let $T=\{5, z\}$, where $z>5$. Since
$\pi_e(L_2(2^3))=\{1,2,3,7,9\}$ and
$\pi_e(L_2(2^5))=\{1,2,3,11,31,33\}$, we get that such $T$ does not
exists by the same reason.

For the other $T$ with $|T|=2$, we can find a counterexample from $\pi_e(A_5)=\{1,2,3,5\}$.

Next we consider the case of $|T|=3$. Let $T=\{n_1, n_2, n_3\}$.
From the above discussion, $n_1, n_2, n_3$ are odd. We can assume
that $n_1<n_2<n_3$.

Suppose $n_1=3$. If $n_2$ or $n_3 \in \{4,5\}$, it is a trivial consequence of the above discussion.

So we can assume that $n_2>5$ and $m_3>6$. Since
$\pi_e(Sz(2^3))=\{1,2,4,5,7,13\}$,
$\pi_e(Sz(2^5))=\{1,2,4,5,25,31,41\}$ and
$\pi_e(Sz(2^7))=\{1,2,4,5,29,113,127,145\}$, we can get an
counterexample for any $n_2, n_3$. Hence such $T$ exists.

Suppose $n_1=4$. We can assume that $n_2>4$ and $n_3>5$. Since
$\pi_e(L_2(2^3))=\{1,2,3,7,9\}$,
$\pi_e(L_2(2^5))=\{1,2,3,11,31,33\}$ and
$\pi_e(L_2(2^7))=\{1,2,3,43,127,129\}$, we can get an counterexample
for any $n_2, n_3$. Hence such IES-set does not exist. We can also
get the same conclusion for $n_1=5$.

For $n_1>5$, $\pi_e(A_5)=\{1,2,3,5\}$ will give us an counterexample.

Therefore, the nontrivial IES-set $T$ with $|T|=3$ does not exist.

Finally, we consider the case of IES-set $T$ with $|T|>3$. We claim
that no such IES-set exists. To do this, we give the following
lemma.

\begin{lemma}
    \label{l5}
    Let $m, n$ be two integer with $d=(m,n)$. Then $(2^m-1, 2^n-1)=2^d-1$.
\end{lemma}
\begin{proof}
    Suppose that $m=nq+r$, where $0\le r<n$. Then $2^{m}-1=2^{nq+r}-1=2^r(2^{nq}-1)+2^r-1$$\equiv$$2^r-1(\mod{2^n-1})$. So $(2^m-1, 2^n-1)=(2^r-1, 2^n-1)$. In this way, we can get
     $$(2^m-1, 2^n-1)=2^d-1.$$
\end{proof}

This lemma is a direct consequence of Theorem 1 of Section 7.4 in \cite{K}.

\begin{corollary}
    Let $p, q$ be two different primes. Then $\pi_e(L_2(2^p)) \cap \pi_e(L_2(2^q))=\{1,2,3\}$.
\end{corollary}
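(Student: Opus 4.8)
The plan is to use Lemma~\ref{l5} to compute the intersection of the element-order sets of $L_2(2^p)$ and $L_2(2^q)$ directly from the structure of those sets. First I would recall, using Lemma~\ref{l2}, that $\pi_e(L_2(2^n))$ consists of $1$, the prime $2$, all divisors of $2^n-1$, and all divisors of $2^n+1$. Thus $\pi_e(L_2(2^p)) \cap \pi_e(L_2(2^q))$ is the union of the four pieces obtained by intersecting $\{$divisors of $2^p\mp 1\}$ with $\{$divisors of $2^q\mp 1\}$, together with the guaranteed common elements $1$ and $2$.

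The core computation is to identify the divisors common to two such expressions. A divisor $d$ of $2^p-1$ that also divides $2^q-1$ must divide $\gcd(2^p-1,\,2^q-1)$, and by Lemma~\ref{l5} this gcd equals $2^{(p,q)}-1 = 2^1-1 = 1$, since $p,q$ are distinct primes and so $(p,q)=1$. Hence the ``$(2^p-1,\,2^q-1)$'' piece contributes nothing beyond the trivial divisor. The remaining three cross-terms involving $2^p+1$ and $2^q+1$ require the analogous fact that $\gcd(2^a\pm 1,\,2^b\pm 1)$ is also controlled by $(a,b)$; I would handle these by the standard extension of Lemma~\ref{l5} (for instance, noting $(2^a+1)\mid(2^{2a}-1)$ and reducing $\gcd(2^a+1,2^b-1)$ and $\gcd(2^a+1,2^b+1)$ to gcds of the form $2^{d}-1$ with small exponent $d$ governed by $(a,b)=1$). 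Each such cross-gcd collapses to $1$ or $3$: specifically, since $3\mid(2^p+1)$ and $3\mid(2^q+1)$ for all odd $p,q$, the order $3$ appears in both sets, so $3$ is a genuine common element order.

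Assembling these pieces, the only element orders surviving in the intersection are $1$, $2$, and $3$, which gives the claimed equality $\pi_e(L_2(2^p)) \cap \pi_e(L_2(2^q)) = \{1,2,3\}$. I expect the main obstacle to be the bookkeeping for the $\gcd$ computations involving the ``$+1$'' factors, since Lemma~\ref{l5} is stated only for the $2^m-1$ case; the key step is therefore to verify carefully that no prime larger than $3$ can simultaneously divide one of $2^p\pm 1$ and one of $2^q\pm 1$, which follows once one observes that any such common prime divisor forces a relation on the multiplicative order of $2$ modulo that prime incompatible with $p\neq q$ both being prime.
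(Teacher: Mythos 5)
Your proposal is correct and rests on the same key ingredient as the paper's proof, namely Lemma~\ref{l5} applied to the element-order structure of $L_2(2^n)$ given by Lemma~\ref{l2}; the only slip is cosmetic: when one of $p,q$ equals $2$, the order $3$ enters via $3\mid 2^2-1$ rather than $3\mid 2^p+1$, so $3$ is still a common element order. The paper dispatches in one stroke the $\pm 1$ bookkeeping you flag as the main obstacle: every odd element order of $L_2(2^p)$ divides $2^{2p}-1$, and $(2p,2q)=2$, so a single application of Lemma~\ref{l5} yields $\gcd(2^{2p}-1,\,2^{2q}-1)=2^2-1=3$, with no four-way case split over the $2^p\pm1$ and $2^q\pm1$ factors.
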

\begin{proof}
It is well-known that $|L_2(2^p)|=2^p(2^{2p}-1)$,
$|L_2(2^q)|=2^q(2^{2q}-1)$, and $(2p, 2q)=2$. By Lemma \ref{l5},
there is no common element order in $L_2(2^p)$ and $L_2(2^q)$ except
$1,2,3$.
\end{proof}

\begin{corollary}
    Let $p,q$ are two different odd primes. Then $\pi_e(Sz(2^p)) \cap \pi_e(Sz(2^q))=\{1,2,4,5\}$.
\end{corollary}
\begin{proof}
    We first prove that $(2^{2p}+1, 2^{2q}+1)=5$ for any different odd prime $p, q$.

Clearly, $(2^{2p}+1, 2^{2q}+1) \mid (2^{4p}-1, 2^{4q}-1)$. By Lemma \ref{l5}, $(2^{4p}-1, 2^{4q}-1)=2^4-1=3\times 5$. Since $3 \nmid 2^{2p}+1$, $(2^{2p}+1, 2^{2q}+1)=5$.

In the same way, we can get that $(2^{2p}+1, 2^q-1)=(2^{2q}+1, 2^p-1)=1$.

It is easy to get that $\pi_e(Sz(2^p)) \cap
\pi_e(Sz(2^q))=\{1,2,4,5\}$.
\end{proof}

Now we continue the discussion of IES-set $T$ with $|T|= s>3$. Let
$T=\{n_1, n_2, \cdots, n_s\}$, where $n_1 <n_2 < \cdots< n_s$ and
$s>3$.

As above, the case that $n_1=2$, or $\{n_1, n_2\}=\{3,4\}$, or $\{n_1, n_2\}=\{3,5\}$ is trivial. If $n_1>5$, $A_5$ will be a counterexample. So we need to consider the following cases:

(d) Let $T=\{3,n_2,\cdots, n_s\}$, where $n_2>5$. Notice that
$\pi_e(Sz(2^p))\cap \pi_e(Sz(2^q))=\{1,2,4,5\}$. We get that  there
exists $k\in \pi_e(Sz(2^p))-\{1,2,4,5\}$ and $k \not \in
\pi_e(Sz(2^q))$. Hence for any $s=|T|$, there are enough large
primes $p$ to provide counterexample. Thus no such $T$ exists.

(e) Let $T=\{4,n_2,\cdots, n_s\}$, where $n_2>4$. Notice
$\pi_e(L_2(2^p)) \cap \pi_e(L_2(2^q))=\{1,2,3\}$. We get that there
exists $k\in \pi_e(L_2(2^p))-\{1,2,3\}$ and $k \not \in
\pi_e(L_2(2^q)), q \neq p$. Hence for any $s=|T|$, there are enough
large primes $p$ to provide counterexample. Thus no such $T$ exists.

Hence we get:
\begin{theorem}
    Let $G$ be a finite group and $\pi_e(G)$ be the set of element orders of $G$.
    If $2 \not \in \pi_e(G)$, $\pi_e(G) \cap \{3,4\}=\emptyset$, or $\pi_e(G) \cap \{3,5\}=\emptyset$, then $G$ is solvable. Furthermore, $T$ is an IES-set if and only if $T=\{2\}$, $\{3,4\}$ or $\{3,5\}$.
\end{theorem}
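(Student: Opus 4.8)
The plan is to prove both halves by reducing every question about solvability to the behaviour of the set of element orders on \emph{sections} of $G$, and then invoking Thompson's classification (Lemma~\ref{l4}). The first observation I would record is that $\pi_e$ can only grow under passage to sections: if $B \trianglelefteq A \le G$, then $\pi_e(A/B) \subseteq \pi_e(A) \subseteq \pi_e(G)$. The inclusion $\pi_e(A) \subseteq \pi_e(G)$ is immediate, and for the quotient I would argue that if $gB$ has order $m$ in $A/B$ then $g^k = 1$ forces $m \mid k$, so $m \mid |g|$; writing $|g| = me$ we find that $g^{e}$ has order exactly $me/\gcd(me,e) = m$, whence $m \in \pi_e(A)$. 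Thus $\pi_e(H) \subseteq \pi_e(G)$ for every section $H$ of $G$.

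Next I would establish the structural reduction: if $G$ is non-solvable, then $G$ has a section isomorphic to one of the minimal simple groups of Lemma~\ref{l4}. To see this, pick a non-solvable section $H$ of $G$ of least order; since a section of a section is a section, minimality makes every proper section of $H$ solvable, so in particular every proper subgroup of $H$ is solvable. If the solvable radical $R$ of $H$ were non-trivial, then $H/R$ would be a proper, hence solvable, quotient and $H$ itself would be solvable; so $R = 1$. A minimal normal subgroup of $H$ is then a direct product of non-abelian simple groups, one of which is a non-solvable subgroup of $H$ and is therefore all of $H$. Hence $H$ is simple with every proper subgroup solvable, i.e.\ $H$ is a minimal simple group, and by the previous paragraph $\pi_e(H) \subseteq \pi_e(G)$.

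Now the solvability statement follows by inspecting Lemma~\ref{l4}. The groups in cases (1)--(4) each contain an element of order $3$, while $Sz(2^{2m+1})$ contains elements of orders $4$ and $5$ (indeed $5 \mid 2^{4m+2}+1$), and all of them have even order. Hence every minimal simple group $S$ satisfies $2 \in \pi_e(S)$, $\pi_e(S)\cap\{3,4\} \ne \emptyset$ and $\pi_e(S)\cap\{3,5\} \ne \emptyset$. Therefore, if $\pi_e(G)$ misses $2$, or is disjoint from $\{3,4\}$, or is disjoint from $\{3,5\}$, then the section inclusion shows $G$ can have no minimal simple section, and the reduction forces $G$ to be solvable. (The case $2 \notin \pi_e(G)$ may alternatively be settled at once by the Feit--Thompson theorem.) The genuinely deep input here is Thompson's classification itself, which I am assuming; granting it, the only delicate point is verifying that the Suzuki groups, the single family lacking order-$3$ elements, still supply both order $4$ and order $5$.

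Finally, the equivalence ``$T$ is an IES-set if and only if $T \in \{\{2\},\{3,4\},\{3,5\}\}$'' is obtained by assembling the case analysis preceding the statement. The three listed sets are IES by the solvability half just proved. For the converse I would run through $|T| = 1, 2, 3$ and $|T| > 3$ as above: $\{2\}$ is forced when $|T|=1$ by Feit--Thompson, and in every remaining configuration one exhibits a non-solvable group avoiding $T$, using $\pi_e(A_5)=\{1,2,3,5\}$ together with the spectra of the families $L_2(2^p)$ and $Sz(2^{2m+1})$. The engine here is the two corollaries computing $\pi_e(L_2(2^p)) \cap \pi_e(L_2(2^q)) = \{1,2,3\}$ and $\pi_e(Sz(2^p)) \cap \pi_e(Sz(2^q)) = \{1,2,4,5\}$: they guarantee that, as $p$ ranges over large primes, these simple groups acquire arbitrarily many new element orders, so no finite $T$ other than the three exceptions can meet the spectrum of all of them. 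I expect this bookkeeping to be the most laborious part, but it is routine once the two intersection corollaries are in hand.
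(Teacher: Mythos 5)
Your proposal is correct and follows essentially the same route as the paper: reduce solvability to Thompson's list of minimal simple groups, observe that each of them meets $\{2\}$, $\{3,4\}$ and $\{3,5\}$, and then dispose of all other candidate sets $T$ via $A_5$, the families $L_2(2^p)$ and $Sz(2^p)$, and the two intersection corollaries. The only difference is that you spell out the (standard) reduction that a non-solvable group has a section which is a minimal simple group and that $\pi_e$ of a section is contained in $\pi_e(G)$, a step the paper leaves implicit.
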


{\bf Acknowledgements}  The author would like to thank Prof. Ming
Luo for his help in number theory.

\baselineskip -.2 true cm
\bibliographystyle{plain}

\end{document}